\newtheorem{theorem}{Theorem}
\newtheorem{lemma}{Lemma}
\newtheorem{rem}{Remark}
\newtheorem{cor}{Corollary}
\begin{document}
\author{Mark Pankov}
\title[Characterization of semilinear isomorphisms and homeomorphisms ]
{Geometrical characterization of semilinear isomorphisms of vector spaces and 
semilinear homeomorphisms of normed spaces}
\subjclass[2010]{51A10, 46B03}
\address{Department of Mathematics and Computer Science, University of Warmia and Mazury,
S{\l}oneczna 54, 10-710 Olsztyn, Poland}
\keywords{projective space, normed space}
\email{pankov@matman.uwm.edu.pl, markpankovmath@gmail.com}

\maketitle

\begin{abstract}
Let $V$ and $V'$ be vector spaces over division rings
(possible infinite-dimensional) 
and let ${\mathcal P}(V)$ and ${\mathcal P}(V')$ be the associated projective spaces.
We say that $f:{\mathcal P}(V)\to {\mathcal P}(V')$ is a PGL-{\it mapping}
if for every $h\in {\rm PGL}(V)$ there exists $h'\in {\rm PGL}(V')$ such that $fh=h'f$.
We show that for every PGL-bijection the inverse mapping is a semicollineation.
Also, we obtain an analogue of this result for the projective spaces associated to normed spaces.
\end{abstract}

\section{Introduction}
Let $V$ and $V'$ be vector spaces over division rings.
Denote by ${\mathcal P}(V)$ and ${\mathcal P}(V')$ the associated projective spaces
(the dimensions of our vector spaces are assumed to be not less than $3$,
possible the vector spaces are infinite-dimensional).
We say that $f:{\mathcal P}(V)\to {\mathcal P}(V')$ is a PGL-{\it mapping}
if for every $h\in {\rm PGL}(V)$ there exists $h'\in {\rm PGL}(V')$ such that 
$$fh=h'f$$
(we identify ${\rm PGL}(V)$ with the group of all transformation of ${\mathcal P}(V)$
induced by linear automorphisms of $V$). 
Every strong semilinear embedding 
(a semilinear injection transferring any collection of linearly independent vectors to linearly independent vectors)
induces a PGL-mapping.
By \cite[Theorem 2]{Pankov}, every non-constant PGL-mapping of 
${\mathcal P}(V)$ to ${\mathcal P}(V')$ is induced by a strong semilinear embedding of $V$ in $V'$
if $V'$ is a vector space over a field and the image is contained in a subspace of $V'$
whose dimension is not greater than $\dim V$.
This characterization of strong semilinear embeddings is closely related 
to the description of so-called $l$-rigid isometric embeddings of Grassmann graphs
\cite[Section 2.10]{Pankov-b2}.
There are non-constant PGL-mappings which are not induced
by semilinear mappings \cite[Example 6]{Pankov}. 

In this note we show that for every PGL-bijection the inverse mapping is a semicollineation
(Theorem \ref{th1}).
As a consequence, we get a characterization of collineations 
in terms of the projective linear groups 
(Corollary \ref{cor1}).

At the end, we replace the projective linear groups by the groups of transformations
induced by linear automorphisms of  normed spaces and 
consider the corresponding analogues of PGL-mappings.
Our result (Theorem \ref{th2}) is in spirit of the classical theorem on isomorphisms of
the lattices formed by closed subspaces of normed spaces \cite{FL,M}.

\section{Linear algebra and projective geometry}
Let $V$ and $V'$ be left vector spaces over division rings $R$ and $R'$, respectively.
We suppose that the dimensions of $V$ and $V'$ both
are not less than $3$ (possible the vector spaces are infinite-dimensional).

A mapping $l:V\to V'$ is called {\it semilinear} if
$$l(x+y)=l(x)+l(y)\;\;\;\;\;\forall\;x,y\in V$$
and there is a homomorphism $\sigma:R\to R'$ such that
$$l(ax)=\sigma(a)l(x)\;\;\;\;\;\forall\;a\in R,x\in V.$$
If $R=R'$ and $\sigma$ is identity then the mapping $l$ is linear.
A  semilinear bijection of $V$ to $V'$ is a {\it semilinear isomorphism} if 
the associated homomorphism of $R$ to $R'$ is an isomorphism
(there are semilinear bijections over non-surjective homomorphisms of division rings).

Denote by ${\mathcal P}(V)$ the projective  space associated to $V$
(the points are $1$-dimensional subspaces of $V$ and 
the lines are defined by $2$-dimensional subspaces of $V$, i.e.
the line corresponding to a $2$-dimensional subspace $S$
consists of all $1$-dimensional subspaces contained in $S$).
Every semilinear injection $l:V\to V'$ induces the mapping
$$\pi(l):{\mathcal P}(V)\to {\mathcal P}(V')$$
which transfers every $1$-dimensional subspace $P\subset V$
to the $1$-dimensional subspace of $V'$ containing $l(P)$.
For every non-zero scalar $a\in R'$ we have $\pi(al)=\pi(l)$.
Conversely, if $l(V)$ contains two linearly independent vectors and
$s:V\to V'$ is a semilinear injection such that $\pi(l)=\pi(s)$ 
then $s$ is a scalar multiple of $l$.
This means that the projective linear group ${\rm PGL}(V)$ can be identified with 
the group of all transformations of ${\mathcal P}(V)$ induced by linear automorphisms of $V$,
i.e. the transformations of type $\pi(u)$, where $u\in {\rm GL}(V)$.

Three distinct points in a projective space are called {\it collinear} if there is a line containing them; 
otherwise, these points are said to be {\it non-collinear}.
A bijection between two projective spaces 
is a {\it semicollineation} if it transfers any triple of collinear points to a triple of collinear points
(in this case, lines go to subsets of lines).  
A semicollineation is a {\it collineation} if the inverse mapping also is a semicollineation,
or equivalently, three points are collinear if and only if their images are collinear.
By the Fundamental Theorem of Projective Geometry,
every collineation of ${\mathcal P}(V)$ to ${\mathcal P}(V')$ is induced by 
a semilinear isomorphism of $V$ to $V'$.
In the case when $\dim V\le \dim V'<\infty$,
every semicollineation of ${\mathcal P}(V)$ to ${\mathcal P}(V')$ is a collineation.
There is a semicollineation of the projective space associated to a $5$-dimensional vector space
to a non-desarguesian projective plane \cite{C}.
The following problem is still open: 
are there semicollineations of ${\mathcal P}(V)$ to ${\mathcal P}(V')$
which are not collineations?
On the other hand, it follows from Faure--Fr\"{o}licher-Havlicek's
version of the Fundamental Theorem of Projective Geometry \cite{FaureFrolicher,Havlicek}
that every semicollineation of ${\mathcal P}(V)$ to ${\mathcal P}(V')$
is induced by a semilinear injection of $V$ to $V'$
which sends any pair of linearly independent vectors to linearly independent vectors. 

We say that ${\mathcal X}\subset {\mathcal P}(V)$ is a PGL-{\it subset}
if every permutation on ${\mathcal X}$ can be extended to an element of 
${\rm PGL}(V)$.
Recall that $P_{1},\dots,P_{k}\in {\mathcal P}(V)$ form an {\it independent subset}
if non-zero vectors $x_{1}\in P_{1},\dots, x_{k}\in P_{k}$ are linearly independent.
A $(k+1)$-element subset ${\mathcal X}\subset {\mathcal P}(V)$ is called a $k$-{\it simplex} if
it is not independent and every $k$-element subset of ${\mathcal X}$ is independent.
Independent subsets and simplices are PGL-subsets.
In the case when $R$ is a field, 
all finite PGL-subsets of ${\mathcal P}(V)$ are described \cite[Proposition 2]{Pankov}.
We do not use this description here.

\section{PGL-bijections}
Recall that $f:{\mathcal P}(V)\to {\mathcal P}(V')$ is called 
a PGL-{\it mapping} if  it satisfies the following condition: 
\begin{enumerate}
\item[] for every $h\in {\rm PGL}(V)$ there is $h'\in {\rm PGL}(V')$ such that $fh=h'f$.
\end{enumerate}
All non-constant PGL-mappings are injective \cite[Lemma 12]{Pankov}.
In the case when $f$ is bijective, the latter condition can be rewritten as follows:
\begin{enumerate}
\item[]$fhf^{-1}$ belongs to ${\rm PGL}(V')$
for every $h\in {\rm PGL}(V)$.
\end{enumerate}

\begin{theorem}\label{th1}
If $f:{\mathcal P}(V)\to {\mathcal P}(V')$ is a {\rm PGL}-bijection
then $f^{-1}$ is a semicollinea\-tion.
\end{theorem}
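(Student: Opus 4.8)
The plan is to reduce the statement to the assertion that $f$ carries every triple of linearly independent points of $\mathcal{P}(V)$ to a triple of non-collinear points of $\mathcal{P}(V')$; since $f$ and $f^{-1}$ are bijective, this is equivalent to saying that $f^{-1}$ sends collinear triples to collinear triples, i.e. that $f^{-1}$ is a semicollineation. The starting point is that, because $f$ is a bijection, the {\rm PGL}-condition says precisely that conjugation by $f$ is a group homomorphism $\phi\colon{\rm PGL}(V)\to{\rm PGL}(V')$, $\phi(h)=fhf^{-1}$, with the intertwining relation $f\circ h=\phi(h)\circ f$ for all $h\in{\rm PGL}(V)$. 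Note that every $\phi(h)$, being an element of ${\rm PGL}(V')$, is induced by a linear automorphism of $V'$ and hence is a collineation of $\mathcal{P}(V')$.

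The key point is a rigidity phenomenon coming from transitivity. First I would record the elementary fact that ${\rm PGL}(V)$ acts transitively on ordered triples of points forming an independent subset: given two such triples, extend the corresponding triples of representative vectors to bases of $V$ (which have equal cardinality) and take the linear automorphism matching these bases; this argument is insensitive to whether $V$ is infinite-dimensional and to whether $R$ is commutative. Combined with the intertwining relation and with the fact that each $\phi(h)$ preserves collinearity, this yields an all-or-nothing dichotomy: either $f$ carries no independent triple to a collinear triple, or it carries every independent triple to a collinear triple. Indeed, if $f(\{P_1,P_2,P_3\})$ is collinear for some independent triple and $(P_1',P_2',P_3')$ is any ordered independent triple, choose $h\in{\rm PGL}(V)$ with $h(P_i)=P_i'$; then $f(\{P_i'\})=\phi(h)(f(\{P_i\}))$ is collinear.

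It remains to exclude the second alternative. Assume $f$ sends every independent triple of $\mathcal{P}(V)$ to a collinear triple of $\mathcal{P}(V')$. Fix distinct points $P_1,P_2$ and let $m$ be the line of $\mathcal{P}(V')$ through $f(P_1)$ and $f(P_2)$. For any point $Q\notin\overline{P_1P_2}$ the triple $\{P_1,P_2,Q\}$ is independent, so $f(P_1),f(P_2),f(Q)$ are three distinct collinear points; as two of them already lie on $m$, so does $f(Q)$. Hence $f$ maps $\mathcal{P}(V)\setminus\overline{P_1P_2}$ into $m$. Since $\dim V\ge 3$ there is a point $P_3\notin\overline{P_1P_2}$, so $f(P_3)\in m$ and the line through $f(P_1)$ and $f(P_3)$ is again $m$; repeating the argument with the pair $(P_1,P_3)$ shows that $f$ maps $\mathcal{P}(V)\setminus\overline{P_1P_3}$ into $m$ as well. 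The distinct lines $\overline{P_1P_2}$ and $\overline{P_1P_3}$ meet only in $P_1$, and $f(P_1)\in m$; therefore $f(\mathcal{P}(V))\subseteq m$. But $f$ is surjective, so $\mathcal{P}(V')=m$ is a single line, contradicting $\dim V'\ge 3$. This contradiction rules out the second alternative, so $f$ takes every independent triple to a non-collinear triple, and $f^{-1}$ is a semicollineation.

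The step I expect to carry the weight is the rigidity dichotomy: recognizing that a single "bad" independent triple forces all independent triples to be bad is what converts a purely local obstruction into one that can be contradicted globally; the covering argument at the end is then routine. I would also remark that the same reasoning applies to collinear triples, since ${\rm PGL}(V)$ is transitive on those too, so $f$ either sends every collinear triple to a collinear triple — in which case $f$ is actually a collineation — or it sends every collinear triple to a non-collinear one; the latter possibility is exactly the content of the open problem on semicollineations mentioned above, which is why one cannot conclude more than that $f^{-1}$ is a semicollineation.
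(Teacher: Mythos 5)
Your proof is correct, but it follows a genuinely different route from the paper's. The paper factors the argument through two lemmas about PGL-subsets: a PGL-bijection carries PGL-subsets to PGL-subsets (Lemma \ref{lemma1-1}), and in a four-point PGL-subset three points on a line force the fourth onto that line (Lemma \ref{lemma1-2}); it then adjoins a fourth point to a non-collinear triple with collinear image and must split into the cases $\dim V\ge 4$ and $\dim V=3$, the latter with a further sub-case for the Fano plane where $3$-simplices replace independent $4$-sets. You instead invoke the transitivity of $\mathrm{PGL}(V)$ on ordered independent triples (valid over any division ring and in any dimension, as you correctly note, since linearly independent sets extend to bases of equal cardinality) to obtain an all-or-nothing dichotomy, and you eliminate the bad alternative by covering $\mathcal{P}(V)$ with the complements of two lines meeting in one point; this is uniform in $\dim V$ and needs no Fano-plane discussion. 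What your version buys is a shorter, case-free proof. What the paper's formulation buys is a pair of lemmas that are reused verbatim in Section 4 for the normed-space analogue, where the acting group is $\mathrm{PGL}({\mathcal N})$ rather than $\mathrm{PGL}(V)$ — although your argument would transfer there too, since the extension property in Lemma \ref{lemma2-1} gives transitivity of $\mathrm{PGL}({\mathcal N})$ on independent triples as well. Your closing remark about collinear triples is also sound: $\mathrm{PGL}(V)$ is transitive on ordered collinear triples of distinct points, so the same dichotomy applies to them, and the unresolved alternative is precisely the open problem on semicollineations that are not collineations cited in Section 2.
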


\begin{cor}\label{cor1}
For every bijection $f:{\mathcal P}(V)\to {\mathcal P}(V')$ 
the following two conditions are equivalent:
\begin{enumerate}
\item[{\rm (1)}] $f$ is a collineation, 
\item[{\rm (2)}] $f$ and $f^{-1}$ both are {\rm PGL}-mappings.
\end{enumerate}
\end{cor}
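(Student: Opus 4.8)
My plan is to characterize collinearity of a triple of points in ${\mathcal P}(V')$ intrinsically, in terms that are preserved by conjugation by ${\rm PGL}(V')$, and then pull this back through $f$. The key observation is that among $3$-element subsets of a projective space, the collinear ones and the independent ones are distinguished by their behaviour under the relevant projective groups: a triple is collinear precisely when it admits "many" permutations extendable to the group — but more robustly, collinearity of $\{P_1,P_2,P_3\}$ is equivalent to the existence of a further point $P_4$ such that $\{P_1,P_2,P_3,P_4\}$ is \emph{not} a $3$-simplex yet every $3$-subset containing $P_4$ is independent is \emph{impossible}; in other words I want to isolate collinearity via the failure to extend to an independent $4$-set. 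Concretely: three distinct points are non-collinear iff they form an independent triple, and an independent triple $\{P_1,P_2,P_3\}$ can always be enlarged to an independent quadruple (since $\dim V\ge 3$... wait, need $\dim\ge 4$ or use a simplex instead). So the clean invariant is: $\{P_1,P_2,P_3\}$ is collinear $\iff$ it is not independent $\iff$ there is no point $Q$ with $\{P_1,P_2,P_3,Q\}$ independent and additionally... — I will instead use the $3$-simplex: an independent triple extends to a $3$-simplex, a collinear triple does not extend to any $3$-simplex nor to any independent quadruple. The precise dichotomy I will prove is: \emph{a triple of distinct points is collinear if and only if it is contained in no $3$-simplex}. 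This is the property I will transport.

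First I would record the group-theoretic transport principle: if $g:{\mathcal P}(W)\to{\mathcal P}(W')$ is a bijection with $g\,{\rm PGL}(W)\,g^{-1}\subseteq{\rm PGL}(W')$, then $g$ carries PGL-subsets of ${\mathcal P}(W)$ to PGL-subsets of ${\mathcal P}(W')$; indeed every permutation of $g({\mathcal X})$ pulls back via $g^{-1}$ to a permutation of ${\mathcal X}$, which extends to some $h\in{\rm PGL}(W)$, and then $ghg^{-1}\in{\rm PGL}(W')$ is the required extension. Apply this with $g=f$. Next I would show that $f$ \emph{preserves} the property "is a $3$-simplex" among $4$-element subsets: a $4$-element set is a $3$-simplex iff it is a PGL-subset but not an independent set, and I must separately argue $f$ preserves independence of $4$-sets. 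For that I would use that an independent $4$-set ${\mathcal Y}$ is distinguished from a $3$-simplex by the fact that ${\mathcal Y}$ together with any fifth point still has every $3$-subset... no — the cleanest separator is cardinality of the orbit structure: inside an independent $4$-set every permutation extends, and the same for a $3$-simplex, so I cannot separate them by PGL-subset status alone. Therefore I abandon the "simplex characterization of collinearity" route in favour of the following: for distinct $P_1,P_2,P_3$, they are \emph{independent} iff there exist points $Q_1,Q_2,Q_3$ with $Q_i$ on the line $P_jP_k$ ($\{i,j,k\}=\{1,2,3\}$), all six points distinct, such that $\{P_1,P_2,P_3,Q_1,Q_2,Q_3\}$ is a PGL-subset of a very specific combinatorial type — essentially a projective frame configuration — which degenerates when the $P_i$ are collinear. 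This is getting complicated; I expect the paper uses a cleaner trick.

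The cleaner route, which I would actually pursue: show that $f$ sends \emph{independent} triples to \emph{independent} triples. Suppose not: some independent triple $\{P_1,P_2,P_3\}$ has collinear image $\{Q_1,Q_2,Q_3\}$ lying on a line $\ell'$. Pick any fourth point $P_4$ making $\{P_1,P_2,P_3,P_4\}$ an independent $4$-set (possible when $\dim V\ge 4$; when $\dim V=3$ take $P_4$ making a $3$-simplex and argue analogously). This $4$-set is a PGL-subset, so its image $\{Q_1,Q_2,Q_3,Q_4\}$ is a PGL-subset of ${\mathcal P}(V')$ containing the three distinct collinear points $Q_1,Q_2,Q_3$. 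Now I would derive a contradiction from the existence of a $4$-element PGL-subset containing three collinear points: the transposition $(Q_1\,Q_2)$ extended by identity on $\{Q_3,Q_4\}$ lies in ${\rm PGL}(V')$ and fixes $Q_3$ and the line $\ell'=Q_1Q_2$ setwise; more to the point, consider the $3$-cycle $(Q_1\,Q_2\,Q_3)$ fixing $Q_4$ — it is induced by some $h'\in{\rm GL}(V')$ fixing the $2$-dimensional subspace defining $\ell'$ and fixing $Q_4\notin\ell'$; but a linear map fixing $Q_4$ and permuting $Q_1,Q_2,Q_3\in\ell'$ in a $3$-cycle restricts to a linear automorphism of the $2$-dimensional space with three distinct eigenlines cyclically permuted, which is fine over some division rings — so this alone is not contradictory. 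The genuine contradiction comes from simultaneously requiring \emph{every} permutation of the $4$-set to extend while $Q_1,Q_2,Q_3$ are collinear and $Q_4$ is not on $\ell'$: the stabilizer of the flag ($\ell'$, the point $Q_4$) in ${\rm PGL}(V')$ induces on $\{Q_1,Q_2,Q_3\}$ a group of collineations of the line $\ell'$ fixing no point, hence acting $2$-transitively is too much — in fact $\{Q_1,Q_2,Q_3,Q_4\}$ being a PGL-subset forces the symmetric group $S_3$ to act on $\ell'$ by projectivities permuting three fixed points freely, which is consistent, \emph{but} it also forces, via permutations moving $Q_4$, a projectivity sending the collinear triple $\{Q_1,Q_2,Q_3\}$ to a triple involving $Q_4$, i.e. to a \emph{non-collinear} triple (since $Q_4\notin\ell'$ and $Q_1,\dots,Q_4$ are in "general position" as a PGL-set is expected to be) — and projectivities preserve collinearity. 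That is the contradiction. So the hard part, and the crux of the argument, is precisely this: \textbf{in a $4$-element PGL-subset of ${\mathcal P}(V')$ one can realize a permutation that carries a fixed collinear triple to a triple that is not collinear, unless all four points are collinear}; establishing that "unless all four are collinear, some three of them are non-collinear and a permutation swaps a collinear sub-triple with a non-collinear one" requires a short case analysis on how many of the four points are mutually collinear, using $\dim V'\ge 3$ and the definition of PGL-subset. Once independent triples map to independent triples, the contrapositive gives: if $f(P_1),f(P_2),f(P_3)$ are collinear then $P_1,P_2,P_3$ are collinear — which is exactly the statement that $f^{-1}$ is a semicollineation. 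I would then add the boundary bookkeeping: ensure $f^{-1}$ is a bijection (given), and that distinctness is preserved (automatic), and handle $\dim V=3$ by replacing "independent $4$-set" with "$3$-simplex" throughout, the same orbit argument applying verbatim.
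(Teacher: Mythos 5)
Your transport principle (PGL-bijections carry PGL-subsets to PGL-subsets) and your transposition trick are both sound and correspond to the paper's Lemmas \ref{lemma1-1} and \ref{lemma1-2}: extending the transposition $(Q_{3}\,Q_{4})$ while fixing $Q_{1},Q_{2}$ gives a projectivity preserving the line $\ell'=Q_{1}Q_{2}$, so either $Q_{4}\in\ell'$ or you get your collinearity contradiction. But this is exactly where your argument has a genuine gap: you dismiss the branch ``all four of $Q_{1},\dots,Q_{4}$ are collinear'' with the remark that a PGL-subset is ``expected to be in general position,'' and that is false --- four collinear points can perfectly well form a PGL-subset (over ${\mathbb F}_{3}$ an entire line is one), and nothing in your argument prevents $f$ from sending your independent quadruple onto four collinear points. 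In that branch there is no local contradiction at all. The paper closes it globally: granting that the fourth point of every such quadruple lands inside the $2$-dimensional $S'$, it shows that \emph{every} point of ${\mathcal P}(V)$ is forced into ${\mathcal P}(S')$ (points $Q$ outside $S=P_{1}+P_{2}+P_{3}$ directly, points $P$ inside $S$ via an auxiliary independent quadruple $P_{i},P_{j},P,Q$ with $Q\not\subset S$), so the image of the bijection $f$ is contained in the proper subset ${\mathcal P}(S')$, which is absurd. You need this second, global step; the local argument alone cannot finish. The case $\dim V=3$ also needs more than ``argue analogously'': the paper treats the Fano plane separately and handles points lying on the lines $P_{i}P_{j}$ by an auxiliary $3$-simplex.

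Two smaller omissions relative to the corollary as stated: you never address $(1)\Longrightarrow(2)$, which follows from the Fundamental Theorem of Projective Geometry since a collineation is induced by a semilinear isomorphism and conjugation by it carries ${\rm PGL}(V)$ into ${\rm PGL}(V')$; and for $(2)\Longrightarrow(1)$ you must apply your semicollineation conclusion to \emph{both} $f$ and $f^{-1}$ (each is a PGL-bijection by hypothesis) to conclude that $f$ is a collineation, rather than merely that $f^{-1}$ is a semicollineation.
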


The implication $(1)\Longrightarrow (2)$ follows from 
the Fundamental Theorem of Projective Geometry. 
The implication $(2)\Longrightarrow (1)$ is a consequence of Theorem \ref{th1}.

\begin{rem}{\rm
Every PGL-bijection $f:{\mathcal P}(V)\to {\mathcal P}(V')$ 
induces a monomorphism of ${\rm PGL}(V)$ to ${\rm PGL}(V')$
which is an isomorphism if $f^{-1}$ is a PGL-mapping.
If our vector spaces are finite-dimensional then
all isomorphisms between ${\rm PGL}(V)$ and ${\rm PGL}(V')$ are known \cite[Section IV.6]{Die}
and Corollary \ref{cor1} can be obtained from their description.
In the infinite-dimensional case, 
isomorphisms between the projective  linear groups are not described
(see \cite[Section 6]{R} for some remarks).
}\end{rem}

\begin{lemma}\label{lemma1-1}
Every {\rm PGL}-bijection $f:{\mathcal P}(V)\to {\mathcal P}(V')$ transfers
{\rm PGL}-subsets to {\rm PGL}-subsets.
\end{lemma}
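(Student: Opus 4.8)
The plan is to transport an arbitrary permutation of $f({\mathcal X})$ back to ${\mathcal X}$ along $f$, extend it there using the hypothesis that ${\mathcal X}$ is a PGL-subset, and then push the extension forward using the PGL-bijection property of $f$. So let ${\mathcal X}\subset{\mathcal P}(V)$ be a PGL-subset and put ${\mathcal Y}:=f({\mathcal X})$; since $f$ is bijective, $f^{-1}(Q)\in{\mathcal X}$ for every $Q\in{\mathcal Y}$, so for any permutation $g$ of ${\mathcal X}$ the formula $Q\mapsto f(g(f^{-1}(Q)))$ defines a permutation $g'$ of ${\mathcal Y}$, which I will denote $g'=fgf^{-1}$; conversely, given a permutation $\sigma$ of ${\mathcal Y}$, the map $g:=f^{-1}\sigma f$ restricted to ${\mathcal X}$ is a permutation of ${\mathcal X}$ with $fgf^{-1}=\sigma$, so every permutation of ${\mathcal Y}$ is obtained in this way.

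Now fix a permutation $g'$ of ${\mathcal Y}$ and take $g$ as above. Because ${\mathcal X}$ is a PGL-subset, there is $h\in{\rm PGL}(V)$ with $h|_{\mathcal X}=g$. Because $f$ is a PGL-bijection, there is $h'\in{\rm PGL}(V')$ with $fh=h'f$, that is, $h'=fhf^{-1}$. It remains to verify that $h'|_{\mathcal Y}=g'$: for $Q\in{\mathcal Y}$ write $Q=f(P)$ with $P\in{\mathcal X}$; then $h'(Q)=fh(P)=f(g(P))$, and since $g(P)\in{\mathcal X}$ this equals $(fgf^{-1})(Q)=g'(Q)$. Hence $h'$ extends $g'$, and as $g'$ was arbitrary, ${\mathcal Y}=f({\mathcal X})$ is a PGL-subset; since ${\mathcal X}$ was an arbitrary PGL-subset, the lemma follows.

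I do not expect a genuine obstacle here: the statement is essentially a formal consequence of the two definitions, and the only point requiring a little care is the reading of the word ``extends'' --- $h$ must agree with $g$ only on ${\mathcal X}$ (and $h'$ with $g'$ only on ${\mathcal Y}$), while both may act arbitrarily outside --- so the conjugation identity $h'=fhf^{-1}$ closes up with no dimension or cardinality assumptions. Note also what is \emph{not} claimed: although independent subsets and simplices are PGL-subsets, their images under $f$ need only be PGL-subsets, not independent subsets or simplices; this weaker, purely group-theoretic conclusion is presumably what gets combined with the classification/structure of finite PGL-subsets in the proof of Theorem \ref{th1}.
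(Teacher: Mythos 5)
Your argument is correct and is essentially identical to the paper's proof: both transport a permutation $s'$ of $f({\mathcal X})$ back to $s=f^{-1}s'f|_{\mathcal X}$, extend $s$ to some $h\in{\rm PGL}(V)$, and observe that $fhf^{-1}\in{\rm PGL}(V')$ restricts to $s'$ on $f({\mathcal X})$. (Your closing speculation is slightly off --- the paper explicitly does not use the classification of finite PGL-subsets, only that independent subsets and simplices are PGL-subsets --- but this does not affect the proof.)
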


\begin{proof}
Let ${\mathcal X}$ be a PGL-subset of ${\mathcal P}(V)$.
If $s'$ is a permutation  on $f({\mathcal X})$ then 
$$s:=f^{-1}s'f|_{\mathcal X}$$ 
is a permutation  on ${\mathcal X}$.
Let $h\in {\rm PGL}(V)$ be an extension of $s$.
Then $fhf^{-1}$ is an element of ${\rm PGL}(V')$ whose restriction to $f({\mathcal X})$ 
coincides with $s'$. Thus $f({\mathcal X})$ is a PGL-subset.
\end{proof}

\begin{lemma}\label{lemma1-2}
If $P_{1},P_{2},P_{3},P_{3}\in {\mathcal P}(V)$
form a {\rm PGL}-subset and $P_{1},P_{2},P_{3}$ belong to a line
then this line contains $P_{4}$.
\end{lemma}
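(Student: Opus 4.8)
The plan is as follows. Write $L$ for the line through $P_{1},P_{2},P_{3}$; it is determined by a $2$-dimensional subspace $S\subset V$, so each of $P_{1},P_{2},P_{3}$ is a $1$-dimensional subspace contained in $S$, and in fact $S=P_{2}+P_{3}$, since the four points of a PGL-subset are distinct and two distinct points contained in a $2$-dimensional subspace span it.

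Since $\{P_{1},P_{2},P_{3},P_{4}\}$ is a PGL-subset, the transposition exchanging $P_{1}$ and $P_{4}$ while fixing $P_{2}$ and $P_{3}$ extends to some $h\in {\rm PGL}(V)$. Because every element of ${\rm PGL}(V)$ carries $2$-dimensional subspaces to $2$-dimensional subspaces and $h$ fixes the points $P_{2}$ and $P_{3}$, we obtain
$$h(S)=h(P_{2}+P_{3})=h(P_{2})+h(P_{3})=P_{2}+P_{3}=S.$$
Hence $h$ preserves $S$, so $h(P_{1})\subset S$ because $P_{1}\subset S$; that is, $h(P_{1})$ lies on $L$. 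As $h(P_{1})=P_{4}$, we conclude that $P_{4}\in L$, which is the claim.

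I expect no serious obstacle. The only point requiring a moment's care is that the three points $P_{1},P_{2},P_{3}$ are genuinely distinct, so that already $P_{2}+P_{3}=S$ and the transposition $(P_{1}\ P_{4})$ is meaningful; this holds because the four points form a PGL-subset, hence a $4$-element set. Everything else is immediate from the definition of a line in ${\mathcal P}(V)$, the definition of a PGL-subset, and the fact that projective linear transformations preserve the dimensions of subspaces.
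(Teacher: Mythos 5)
Your proof is correct and follows essentially the same route as the paper: the paper extends the transposition fixing $P_{1},P_{2}$ and swapping $P_{3},P_{4}$, notes that the extension preserves $S$, and concludes $P_{4}=h(P_{3})\subset S$, while you do the symmetric thing with the transposition $(P_{1}\;P_{4})$ fixing $P_{2},P_{3}$. The only difference is the choice of which pair is fixed, which is immaterial.
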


\begin{proof}
Let $S$ be the $2$-dimensional subspace of $V$
containing $P_{1},P_{2},P_{3}$.
Consider the permutation on our PGL-subset which leaves fixed 
$P_{1},P_{2}$ and transposes $P_{3}$ and $P_{4}$.
Let $h\in {\rm PGL}(V)$ be an extension of this transposition.
Then $h$ transfers ${\mathcal P}(S)$ to itself.
Since $P_{3}$ is contained in $S$,
we have $P_{4}=h(P_{3})\subset S$.
\end{proof}

Theorem \ref{th1} is a consequence of the following.

\begin{lemma}\label{lemma1-3}
Every {\rm PGL}-bijection $f:{\mathcal P}(V)\to {\mathcal P}(V')$
transfers any triple of non-collinear points to a triple of non-collinear points.
\end{lemma}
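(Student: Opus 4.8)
The plan is to argue by contradiction. Suppose $P_{1},P_{2},P_{3}\in{\mathcal P}(V)$ are non-collinear while $Q_{i}:=f(P_{i})$ all lie on a line $L'\subset{\mathcal P}(V')$. I would deduce that $f({\mathcal P}(V))\subseteq L'$; since $f$ is onto and $\dim V'\ge 3$ forces ${\mathcal P}(V')$ to contain non-collinear points, this is impossible, and the lemma follows. (Recall that three distinct points are non-collinear exactly when the corresponding vectors are linearly independent, and that $f$, being bijective, sends distinct points to distinct points.)

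The engine is a \emph{propagation principle}: if $R_{1},R_{2},R_{3}$ are non-collinear with $f(R_{1}),f(R_{2}),f(R_{3})\in L'$ and $\{R_{1},R_{2},R_{3},P\}$ is a {\rm PGL}-subset, then $f(P)\in L'$. Indeed $f(R_{1}),f(R_{2}),f(R_{3})$ are then three distinct points of $L'$, so $L'$ is the line through them; by Lemma~\ref{lemma1-1} the image $\{f(R_{1}),f(R_{2}),f(R_{3}),f(P)\}$ is a {\rm PGL}-subset, and Lemma~\ref{lemma1-2}, applied in ${\mathcal P}(V')$, puts $f(P)$ on that line. Starting from $\{P_{1},P_{2},P_{3}\}$ (which maps into $L'$ by hypothesis) I would feed this principle three kinds of fourth point, writing $S:=\langle P_{1},P_{2},P_{3}\rangle$, a $3$-dimensional subspace. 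If $P\notin S$, then $\{P_{1},P_{2},P_{3},P\}$ is an independent subset, hence a {\rm PGL}-subset, so $f(P)\in L'$. If $P\in{\mathcal P}(S)$ lies off the three lines $P_{1}P_{2},P_{1}P_{3},P_{2}P_{3}$, then $\{P_{1},P_{2},P_{3},P\}$ is a $3$-simplex, hence a {\rm PGL}-subset, so $f(P)\in L'$. Finally, if $P$ lies on one of those lines, say on $P_{1}P_{2}$ with $P\ne P_{1},P_{2}$, I would choose a point $R_{1}$ already known to lie in $f^{-1}(L')$ for which $\{P_{1},P_{3},R_{1},P\}$ is a {\rm PGL}-subset with $\{P_{1},P_{3},R_{1}\}$ non-collinear, and apply the principle; concretely one takes $R_{1}\notin S$ when $\dim V\ge 4$ (the four points are then independent) and $R_{1}\in{\mathcal P}(S)$ off the three lines with $R_{1}\notin\langle P_{3},P\rangle$ when $\dim V=3$ (the four points then form a $3$-simplex). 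Together with the trivial case $P\in\{P_{1},P_{2},P_{3}\}$, this covers all of ${\mathcal P}(V)$.

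I expect the last step to be the main difficulty, specifically when $\dim V=3$: the point $R_{1}$ needed there — off the three lines and off $\langle P_{3},P\rangle$ — exists precisely when $|R|\ge 3$, because a projective plane over $\mathbb F_{q}$ (with $q=|R|$) has $(q-1)^{2}$ points off a triangle but at most $q-1$ of them on any further line, and $(q-1)(q-2)\ge 1$ exactly for $q\ge 3$. The case $\dim V=3$, $R\cong{\mathbb F}_{2}$ (so ${\mathcal P}(V)$ is the Fano plane, with $7$ points) I would settle separately and directly: the unique point $D$ of ${\mathcal P}(V)$ off the three lines still satisfies $f(D)\in L'$ by the $3$-simplex step, so $L'$ contains the four distinct points $Q_{1},Q_{2},Q_{3},f(D)$; then $|R'|\ge 3$, whence $|{\mathcal P}(V')|\ge|R'|^{2}+|R'|+1\ge 13$, contradicting $|{\mathcal P}(V')|=|{\mathcal P}(V)|=7$. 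A little extra bookkeeping is also needed in the last step when $\dim V\ge 4$, namely that $R_{1}$ can be chosen outside both $S$ and $\langle P_{1},P_{3},P\rangle$; this uses only that a vector space is never the union of two proper subspaces. Granting these points, the three rounds yield $f({\mathcal P}(V))\subseteq L'$ and the argument closes.
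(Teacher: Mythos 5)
Your proof is correct and follows essentially the same route as the paper: assume the three images lie on a line $L'$, then use Lemmas \ref{lemma1-1} and \ref{lemma1-2} to propagate membership in $L'$ to every point of ${\mathcal P}(V)$ (first points outside $S$, then points of ${\mathcal P}(S)$), contradicting surjectivity, with the Fano plane disposed of by a separate count. The only differences are organizational: the paper gives two separate proofs for $\dim V\ge 4$ and $\dim V=3$, and in the first case handles all of ${\mathcal P}(S)\setminus\{P_{1},P_{2},P_{3}\}$ in one step via an independent quadruple $P_{i},P_{j},P,Q$ with $Q\notin S$, whereas you split off the points on the triangle's sides (note that $\langle P_{1},P_{3},P\rangle=S$ there, so your ``extra bookkeeping'' is vacuous); both versions are sound.
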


\begin{proof}[Proof for the case when $\dim V\ge 4$]
Let $P_{1},P_{2},P_{3}\in {\mathcal P}(V)$ be a triple of non-collinear points.
Then
$$S:=P_{1}+P_{2}+P_{3}$$
is $3$-dimensional.
Suppose that the points $f(P_{1}),f(P_{2}),f(P_{3})$ are collinear, i.e.
$$S':=f(P_{1})+f(P_{2})+f(P_{3})$$
is $2$-dimensional.

Let us take any $Q\in {\mathcal P}(V)$ which is not contained in $S$
(such points exist, since $\dim V\ge 4$).
Then $P_{1},P_{2},P_{3},Q$ form an independent subset.
This is a PGL-subset and, by Lemma \ref{lemma1-1},
the same holds for the subset consisting of 
$$f(P_{1}),f(P_{2}),f(P_{3}),f(Q).$$
Since every $f(P_{i})$ is contained in $S'$, Lemma \ref{lemma1-2} guarantees
that $f(Q)\subset S'$.

Now consider any $P\in {\mathcal P}(S)$ distinct from $P_{1},P_{2},P_{3}$.
We choose $P_{i}$ and $P_{j}$ such that $P_{i},P_{j},P$ are non-collinear.
If $Q\in {\mathcal P}(V)$ is not contained in $S$ then 
$P_{i},P_{j},P,Q$ form an independent subset.
This is a PGL-subset and Lemma \ref{lemma1-1} implies 
that 
$$f(P_{i}),f(P_{j}),f(P),f(Q)$$
form a PGL-subset.
The subspace $S'$ contains $f(P_{i}),f(P_{j}),f(Q)$
and, by Lemma \ref{lemma1-2},
we have $f(P)\subset S'$.

So, the image of $f$ is contained in ${\mathcal P}(S')$.
This is impossible, since $f$ is bijective.
\end{proof}

\begin{proof}[Proof for the case when $\dim V=3$]
Let $P_{1},P_{2},P_{3}\in {\mathcal P}(V)$ be a triple of non-col\-linear points.
Suppose that the points $f(P_{1}),f(P_{2}),f(P_{3})$ are collinear 
and denote by $S'$ the $2$-dimensional subspace of $V'$ containing each $f(P_{i})$.
We distinguish the following two cases:
\begin{enumerate}
\item[(1)] Every line of ${\mathcal P}(V)$ contains precisely $3$ points.
Then ${\mathcal P}(V)$ is Fano plane and the same holds for ${\mathcal P}(V')$,
since $f$ is bijective.
\item[(2)] Every line of ${\mathcal P}(V)$ contains more than $3$ points.
\end{enumerate}
Consider any $P\in {\mathcal P}(V)$ such that
$P_{1},P_{2},P_{3},P$ form a $3$-simplex. 
Since every $3$-simplex is a PGL-subset, 
$$f(P_{1}),f(P_{2}),f(P_{3}),f(P)$$
form a PGL-subset by Lemma \ref{lemma1-1}.
Each $f(P_{i})$ is contained in $S'$ and it follows from Lemma \ref{lemma1-2} that
$f(P)\subset S'$. 
In the case (1), this means that $f$ is not  injective and we get a contradiction.

Consider the case (2).
It was established above that $f(P)\subset S'$
for every point $P\in {\mathcal P}(V)$ such that $P_{1},P_{2},P_{3},P$ form a $3$-simplex.
Suppose that  $Q\in {\mathcal P}(V)$ is contained in $P_{1}+P_{2}$
(the case when $Q$ is contained in $P_{1}+P_{3}$ or in $P_{2}+P_{3}$ is similar).
We choose $P\in {\mathcal P}(V)$
such that 
$$\{P_{1},P_{2},P_{3},P\}\;\mbox{ and }\;\{Q,P_{2},P_{3},P\}$$ 
are $3$-simplices (this is possible, since each line contains more than $3$ points).
Then $f(P)\subset S'$ and, by Lemma \ref{lemma1-1},
$$f(Q),f(P_{2}),f(P_{3}),f(P)$$
form a PGL-subset.
Lemma \ref{lemma1-2} implies that $f(Q)\subset S'$.
Thus the image of $f$ is contained in ${\mathcal P}(S')$ which is impossible.
\end{proof}

\section{Application to normed spaces}
Consider two normed space ${\mathcal N}=(N, |\cdot|)$ and ${\mathcal N}'=(N', |\cdot|')$,
where $N$ and $N'$ are vector spaces over the same field ${\mathbb R}$ or ${\mathbb C}$ 
and  $|\cdot|,|\cdot|'$ are norms.

The group of all linear automorphisms of ${\mathcal N}$ will be denoted by ${\rm GL}({\mathcal N})$.
In the case when $N$ is infinite-dimensional,
this is a proper subgroup of ${\rm GL}(N)$. 
We write ${\rm PGL}({\mathcal N})$
for the group of all transformations of ${\mathcal P}(N)$ induced by elements of ${\rm GL}({\mathcal N})$, 
i.e. ${\rm PGL}({\mathcal N})$ consists of all $\pi(u)$ such that $u\in {\rm GL}({\mathcal N})$.

We say that $f:{\mathcal P}(N)\to {\mathcal P}(N')$ is a ${\rm PGL}^{c}$-{\it mapping} if 
it satisfies the following condition:
\begin{enumerate}
\item[] for every $h\in {\rm PGL}({\mathcal N})$ there is $h'\in {\rm PGL}({\mathcal N}')$ such that $fh=h'f$.
\end{enumerate}
As in \cite[Lemma 12]{Pankov},
we can show that all non-constant ${\rm PGL}^{c}$-mappings are injective.
In the case when $f$ is bijective, the latter condition can be rewritten as follows:
\begin{enumerate}
\item[]$fhf^{-1}$ belongs to ${\rm PGL}({\mathcal N}')$
for every $h\in {\rm PGL}({\mathcal N})$.
\end{enumerate}

\begin{theorem}\label{th2}
 If the normed spaces ${\mathcal N}$ and ${\mathcal N}'$ both are infinite-dimensional then
for every bijection $f:{\mathcal P}(N)\to {\mathcal P}(N')$ 
the following two conditions are equivalent:
\begin{enumerate}
\item[{\rm (1)}] $f$ is induced by a semilinear homeomorphism of ${\mathcal N}$ to ${\mathcal N}'$,
\item[{\rm (2)}] $f$ and $f^{-1}$ both are ${\rm PGL}^{c}$-mappings.
\end{enumerate}
\end{theorem}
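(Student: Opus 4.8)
\emph{$(1)\Rightarrow(2)$.} This is a direct computation. If $f=\pi(l)$ for a semilinear homeomorphism $l:\mathcal N\to\mathcal N'$ and $u\in\mathrm{GL}(\mathcal N)$, then $lul^{-1}$ is linear — the semilinear factors $l$ and $l^{-1}$ are taken over mutually inverse automorphisms — and it is a homeomorphism as a composition of homeomorphisms, so $lul^{-1}\in\mathrm{GL}(\mathcal N')$. Then $\pi(l)\pi(u)=\pi(lu)=\pi(lul^{-1})\pi(l)$, which exhibits $f$ as a $\mathrm{PGL}^{c}$-mapping. Since $f^{-1}=\pi(l^{-1})$ and $l^{-1}$ is again a semilinear homeomorphism, the same computation applies to $f^{-1}$.

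\emph{$(2)\Rightarrow(1)$, first steps.} I would begin by showing that $f$ is a collineation, by rerunning the proof of Theorem~\ref{th1} with $\mathrm{PGL}(\mathcal N)$, $\mathrm{PGL}(\mathcal N')$ in place of $\mathrm{PGL}(V)$, $\mathrm{PGL}(V')$. The only point needing attention is that the PGL-subsets used there — independent quadruples and $3$-simplices — remain realizable inside $\mathrm{GL}(\mathcal N)$: such a subset spans a finite-dimensional subspace $F\subset N$, which has a topological complement $G$ in the normed space $\mathcal N$, and any permutation of the given points is induced by the operator acting on $F$ as the corresponding linear automorphism and on $G$ as the identity, an operator which lies in $\mathrm{GL}(\mathcal N)$. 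Granting this, Lemmas~\ref{lemma1-1}--\ref{lemma1-3} go through verbatim (only the case $\dim\ge4$ is relevant), so $f^{-1}$ is a semicollineation; applying the same argument to the $\mathrm{PGL}^{c}$-bijection $f^{-1}$ shows that $f$ too is a semicollineation, hence a collineation. By the Fundamental Theorem of Projective Geometry, $f=\pi(l)$ for a semilinear isomorphism $l:N\to N'$, and the order isomorphism of subspace lattices induced by $f$ is $M\mapsto l(M)$. Exactly as in the first paragraph, $lul^{-1}$ is linear for each $u\in\mathrm{GL}(\mathcal N)$, while $\pi(lul^{-1})=f\pi(u)f^{-1}\in\mathrm{PGL}(\mathcal N')$; hence $lul^{-1}$ is a scalar multiple of an element of $\mathrm{GL}(\mathcal N')$ and therefore itself belongs to $\mathrm{GL}(\mathcal N')$. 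Using the $\mathrm{PGL}^{c}$-condition for $f^{-1}$ as well, $l\,\mathrm{GL}(\mathcal N)\,l^{-1}=\mathrm{GL}(\mathcal N')$.

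\emph{$(2)\Rightarrow(1)$, the lattice of closed subspaces.} Next I would recover the closed subspaces. Conjugation by $l$ carries every bounded involution on $\mathcal N$ to a bounded involution on $\mathcal N'$; writing a bounded rank-one projection $p$ through the involution $I-2p$, it follows that $lpl^{-1}$ is again a bounded (rank-one) projection, and $l(\ker p)=\ker(lpl^{-1})$. Thus $l$ maps closed hyperplanes of $\mathcal N$ onto closed hyperplanes of $\mathcal N'$, and the same holds for $l^{-1}$. By the Hahn--Banach theorem every closed subspace of a normed space is the intersection of the closed hyperplanes containing it, and $M\mapsto l(M)$ preserves intersections; hence $l$ takes closed subspaces onto closed subspaces in both directions, so $f$ restricts to a lattice isomorphism between the lattices of closed subspaces of $\mathcal N$ and of $\mathcal N'$. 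By the classical theorem describing such isomorphisms for infinite-dimensional normed spaces \cite{FL,M}, this lattice isomorphism is induced by a semilinear homeomorphism $g:\mathcal N\to\mathcal N'$; evaluating on one-dimensional subspaces gives $f=\pi(g)$, which is $(1)$.

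\emph{Main obstacle.} The delicate step is the passage to the lattice of closed subspaces. Condition $(2)$ is substantially weaker than preserving all algebraic subspaces, and the latter would not even force the associated field automorphism to be continuous, as finite-dimensional examples already show; everything therefore rests on reading the closed-subspace structure off from $\mathrm{GL}(\mathcal N)$. The argument above does this by recognizing closed hyperplanes via bounded rank-one projections, after which Hahn--Banach reduces all closed subspaces to intersections of closed hyperplanes and the infinite-dimensional lattice theorem \cite{FL,M} finishes the proof — in particular it is there that discontinuous field automorphisms get excluded.
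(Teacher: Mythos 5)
Your proof is correct, and up to the point where $f$ is shown to be a collineation induced by a semilinear isomorphism $l:N\to N'$ it follows the paper's route: your observation that a finite-dimensional subspace of a normed space is topologically complemented, so that permutations of independent quadruples and $3$-simplices are realized inside $\mathrm{GL}(\mathcal N)$, is exactly the content of Lemmas~\ref{lemma2-1} and~\ref{lemma2-2}. The final, topological step is where you genuinely diverge. The paper proves that $l$ and $l^{-1}$ carry closed hyperplanes to closed hyperplanes by contradiction: if $S'=l(S)$ were a proper dense hyperplane, the non-identity element $h'=fhf^{-1}$ of $\mathrm{PGL}(\mathcal N')$ fixing $\mathcal P(S')$ pointwise would be induced by a continuous operator that is scalar on the dense subspace $S'$, hence scalar everywhere, forcing $h'=\mathrm{id}$. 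You obtain the same hyperplane statement constructively, by conjugating the bounded involutions $I-2p$ attached to bounded rank-one projections; this is a clean alternative that uses only the inclusion $l\,\mathrm{GL}(\mathcal N)\,l^{-1}\subseteq\mathrm{GL}(\mathcal N')$ (which you justify correctly via the scalar-multiple uniqueness of inducing operators) and avoids any density argument. After that, the paper feeds the hyperplane statement directly into Lemma~\ref{lemma2-3} (Mackey's Lemma B in the real case, Kakutani--Mackey plus Fillmore--Longstaff in the complex case) to conclude that $l$ itself is a homeomorphism, whereas you first upgrade to the full lattice of closed subspaces via Hahn--Banach and then invoke the Mackey/Fillmore--Longstaff lattice-isomorphism theorem to produce a semilinear homeomorphism $g$ with $\pi(g)=f$. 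The detour through the full lattice is harmless but not needed: the hyperplane version of that theorem is precisely what both arguments ultimately rest on, and your rank-one-projection argument already delivers its hypothesis.
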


\begin{rem}\label{rem2}{\rm
The automorphism group of the field ${\mathbb R}$ is trivial 
and every semilinear mapping between real vector spaces is linear.
The field ${\mathbb C}$ has infinitely many automorphisms,
but an automorphism of ${\mathbb C}$ is continuous if and only if 
it is trivial or the complex conjugate mapping.
Therefore, every continuous semilinear mapping between
complex normed spaces is linear or conjugate linear.
}\end{rem}

\begin{rem}{\rm
If ${\mathcal N}$ and ${\mathcal N}'$ are finite-dimensional complex normed spaces
then the above statement fails.  
In this case, ${\rm PGL}({\mathcal N})$ and ${\rm PGL}({\mathcal N}')$
coincide with ${\rm PGL}(N)$ and ${\rm PGL}(N')$, respectively, 
and every PGL-bijection of ${\mathcal P}(N)$ to ${\mathcal P}(N')$ is a ${\rm PGL}^{c}$-bijection.
By Remark \ref{rem2}, there exist non-continuous semilinear isomorphisms between complex vector spaces. 
}\end{rem}

The implication $(1)\Longrightarrow (2)$ is obvious
and we prove $(2)\Longrightarrow (1)$.

\begin{lemma}\label{lemma2-1}
If ${\mathcal X}$ is a finite {\rm PGL}-subset of ${\mathcal P}(N)$
then every permutation on ${\mathcal X}$  can be extended to an element of 
${\rm PGL}({\mathcal N})$.
 \end{lemma}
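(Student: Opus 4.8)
The plan is to reduce to the finite-dimensional situation and then invoke a Hahn--Banach complement. Write ${\mathcal X}=\{P_{1},\dots,P_{k}\}$ and set $W:=P_{1}+\dots+P_{k}$, a finite-dimensional subspace of $N$. Given a permutation $s$ on ${\mathcal X}$, I use the hypothesis that ${\mathcal X}$ is a PGL-subset to choose $u\in {\rm GL}(N)$ such that $\pi(u)$ restricted to ${\mathcal X}$ coincides with $s$. Since $u$ permutes the subspaces $P_{1},\dots,P_{k}$, it preserves their span, so $u(W)=W$ and $u_{0}:=u|_{W}$ is a linear automorphism of the finite-dimensional space $W$; in particular $u_{0}$ and $u_{0}^{-1}$ are both bounded.

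Next I would produce a closed complement of $W$ in $N$. Fix a basis $e_{1},\dots,e_{d}$ of $W$ with coordinate functionals $e_{1}^{*},\dots,e_{d}^{*}$, and by the Hahn--Banach theorem extend each $e_{i}^{*}$ to a continuous functional $\varphi_{i}$ on $N$. Then $p:=\sum_{i}\varphi_{i}(\cdot)\,e_{i}$ is a bounded projection of $N$ onto $W$, and $W':=\ker p=\bigcap_{i}\ker\varphi_{i}$ is a closed subspace with $N=W\oplus W'$.

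Finally, I define $\tilde{u}:=u_{0}p+({\rm id}_{N}-p)$, i.e. $\tilde{u}(w+w')=u_{0}(w)+w'$ for $w\in W$ and $w'\in W'$. Then $\tilde{u}$ is a bounded linear bijection of $N$ whose inverse $w+w'\mapsto u_{0}^{-1}(w)+w'$ is also bounded, so $\tilde{u}\in {\rm GL}({\mathcal N})$; and for each $P_{i}\subset W$ we have $\tilde{u}(P_{i})=u_{0}(P_{i})=u(P_{i})=s(P_{i})$, hence $\pi(\tilde{u})\in {\rm PGL}({\mathcal N})$ restricts to $s$ on ${\mathcal X}$, which is precisely what is required.

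I do not expect a serious obstacle here. The only input beyond elementary linear algebra is that a finite-dimensional subspace of a normed space admits a closed complement with bounded projection, which is exactly the Hahn--Banach construction above. The point demanding a little care is to verify that $\tilde{u}$ lies in ${\rm GL}({\mathcal N})$ and not merely in ${\rm GL}(N)$ --- that is, that both $\tilde{u}$ and $\tilde{u}^{-1}$ are continuous --- which is immediate from the boundedness of $p$ together with the automatic boundedness of $u_{0}$ and $u_{0}^{-1}$ on the finite-dimensional space $W$; note that we cannot appeal to the open mapping theorem, since completeness of $N$ is not assumed.
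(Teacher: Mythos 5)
Your proof is correct and follows essentially the same route as the paper: restrict an extension $u\in{\rm GL}(N)$ of the permutation to the finite-dimensional span of ${\mathcal X}$, then extend the resulting automorphism to an element of ${\rm GL}({\mathcal N})$. The paper merely asserts the final extension step as a known fact, whereas you supply the standard Hahn--Banach complement construction (correctly, including the observation that boundedness of the inverse must be checked directly since $N$ need not be complete); no discrepancy.
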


\begin{proof}
Let $S$ be the minimal subspace of $N$ containing every element of ${\mathcal X}$.
This subspace is finite-dimensional, since ${\mathcal X}$ is a finite set.
Also, $S$ is invariant for all extensions of the permutations on ${\mathcal X}$.
The required statement follows from the fact that every linear automorphism of $S$
can be extended to an automorphism of the normed space ${\mathcal N}$.  
\end{proof}

Using Lemma \ref{lemma2-1}, we prove the following.

\begin{lemma}\label{lemma2-2}
Every ${\rm PGL}^{c}$-bijection of ${\mathcal P}(N)$ to ${\mathcal P}(N')$ transfers
finite {\rm PGL}-subsets to {\rm PGL}-subsets.
\end{lemma}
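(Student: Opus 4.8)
The plan is to mimic the argument of Lemma \ref{lemma1-1}, but with two modifications forced by the fact that $\mathrm{PGL}(\mathcal N)$ is in general a proper subgroup of $\mathrm{PGL}(N)$: we must only claim the output is a $\mathrm{PGL}$-subset (not a finite one automatically, though it is), and we must use Lemma \ref{lemma2-1} in place of the bare definition of a $\mathrm{PGL}$-subset whenever we want to realize a permutation of a \emph{finite} $\mathrm{PGL}$-subset by an element coming from the normed structure. So let $f:\mathcal P(N)\to\mathcal P(N')$ be a $\mathrm{PGL}^{c}$-bijection and let $\mathcal X$ be a finite $\mathrm{PGL}$-subset of $\mathcal P(N)$. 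Since $f$ is a bijection, $f(\mathcal X)$ is a finite subset of $\mathcal P(N')$ of the same cardinality as $\mathcal X$.

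First I would fix an arbitrary permutation $s'$ of $f(\mathcal X)$ and pull it back: $s:=f^{-1}s'f|_{\mathcal X}$ is a permutation of $\mathcal X$. Here is where the finiteness of $\mathcal X$ is essential — because $\mathcal X$ is a \emph{finite} $\mathrm{PGL}$-subset, Lemma \ref{lemma2-1} gives an extension $h\in\mathrm{PGL}(\mathcal N)$ of $s$, i.e. $h$ is induced by a genuine automorphism of the normed space $\mathcal N$, not merely by an abstract linear automorphism of $N$. This is the point at which the normed-space hypothesis enters, and it is the only nontrivial ingredient. Then, since $f$ is a $\mathrm{PGL}^{c}$-bijection, $h':=fhf^{-1}$ belongs to $\mathrm{PGL}(\mathcal N')$, and its restriction to $f(\mathcal X)$ equals $f s f^{-1}|_{f(\mathcal X)}=s'$. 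Thus every permutation of $f(\mathcal X)$ extends to an element of $\mathrm{PGL}(\mathcal N')\subseteq\mathrm{PGL}(N')$, which is exactly the statement that $f(\mathcal X)$ is a $\mathrm{PGL}$-subset of $\mathcal P(N')$ (in fact, by the same reasoning together with Lemma \ref{lemma2-1} applied on the $N'$ side, every permutation of $f(\mathcal X)$ even extends to an element of $\mathrm{PGL}(\mathcal N')$, but we only need membership in $\mathrm{PGL}(N')$).

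The only step that requires care — and the "main obstacle," such as it is — is recognizing that one cannot directly quote Lemma \ref{lemma1-1}, because the extension $h$ must be chosen inside $\mathrm{PGL}(\mathcal N)$ rather than $\mathrm{PGL}(N)$; this is precisely why the lemma is stated for \emph{finite} $\mathrm{PGL}$-subsets and why Lemma \ref{lemma2-1} was proved first. Everything else is the same diagram chase as in Lemma \ref{lemma1-1}. I would write the proof as follows.

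\begin{proof}
Let $f:{\mathcal P}(N)\to{\mathcal P}(N')$ be a ${\rm PGL}^{c}$-bijection and let ${\mathcal X}$
be a finite ${\rm PGL}$-subset of ${\mathcal P}(N)$. Since $f$ is a bijection, $f({\mathcal X})$
is a finite subset of ${\mathcal P}(N')$. Let $s'$ be any permutation on $f({\mathcal X})$. Then
$$s:=f^{-1}s'f|_{\mathcal X}$$
is a permutation on the finite ${\rm PGL}$-subset ${\mathcal X}$. By Lemma \ref{lemma2-1},
there is $h\in{\rm PGL}({\mathcal N})$ which extends $s$. Since $f$ is a ${\rm PGL}^{c}$-bijection,
$fhf^{-1}$ belongs to ${\rm PGL}({\mathcal N}')\subseteq{\rm PGL}(N')$, and its restriction to
$f({\mathcal X})$ coincides with $fsf^{-1}|_{f({\mathcal X})}=s'$. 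Hence every permutation on
$f({\mathcal X})$ extends to an element of ${\rm PGL}(N')$, i.e. $f({\mathcal X})$ is a
${\rm PGL}$-subset of ${\mathcal P}(N')$.
\end{proof}
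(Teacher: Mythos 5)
Your proof is correct and follows exactly the route the paper intends: the paper omits the argument, saying only ``Using Lemma \ref{lemma2-1}, we prove the following,'' and the intended argument is precisely your adaptation of Lemma \ref{lemma1-1} with Lemma \ref{lemma2-1} supplying the extension inside ${\rm PGL}({\mathcal N})$. You also correctly identify why finiteness is needed and why Lemma \ref{lemma1-1} cannot be quoted verbatim.
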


Let $f:{\mathcal P}(N)\to {\mathcal P}(N')$ be a bijection 
such that $f$ and $f^{-1}$ both are ${\rm PGL}^{c}$-mappings.
As in the proof of Lemma \ref{lemma1-3},
we establish that $f$ and $f^{-1}$ are semicollineations.
Hence $f$ is a collineation, i.e.
it is induced by a semilinear isomorphism $l:N\to N'$.

\begin{lemma}\label{lemma2-3}
If $l:N\to N'$ is a semilinear isomorphism 
such that $l$ and $l^{-1}$ both transfer closed subspaces of codimension $1$
to closed subspaces of codimension $1$
then $l$ is a homeomorphism of ${\mathcal N}$ to ${\mathcal N}'$. 
\end{lemma}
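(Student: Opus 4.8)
The plan is to proceed in three stages: first promote the hypothesis to the statement that $l$ induces an isomorphism of the lattices of closed subspaces; next reduce the whole conclusion to the single fact that the field automorphism $\sigma$ attached to $l$ is continuous; and finally, once that is in hand, obtain boundedness of $l$ and of $l^{-1}$ by a closed graph argument on the dual spaces. I expect the continuity of $\sigma$ to be the only genuine difficulty.

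\emph{From hyperplanes to the lattice.} By the Hahn--Banach theorem every closed subspace of $N$ is the intersection of the closed subspaces of codimension $1$ that contain it, and a bijection commutes with arbitrary intersections; so the hypothesis forces $l$ and $l^{-1}$ to send closed subspaces to closed subspaces. Hence $l(\overline{S})=\overline{l(S)}$ for every subspace $S$, and $l$ restricts to an isomorphism of the lattice of closed subspaces of $\mathcal N$ onto that of $\mathcal N'$.

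\emph{A dual reformulation and the final step modulo $\sigma$.} For $\psi\in(N')^{*}$ the functional $S\psi:=\sigma^{-1}\circ\psi\circ l$ is linear on $N$ (a direct check, using that $l$ is $\sigma$-semilinear), and its kernel is $l^{-1}(\ker\psi)$, which is closed by the hypothesis; hence $S\psi\in N^{*}$. Symmetrically $T\varphi:=\sigma\circ\varphi\circ l^{-1}\in(N')^{*}$ for $\varphi\in N^{*}$, and $S,T$ are mutually inverse semilinear bijections between the complete spaces $N^{*}$ and $(N')^{*}$. Suppose now $\sigma$ is continuous, i.e.\ the identity or complex conjugation (cf.\ Remark \ref{rem2}); then $\sigma$ is an isometry of the scalar field, the $\sigma$-twisted scalar multiplication turns $(N')^{*}$ into a Banach space with the same norm, and the graph of $T$ is closed, since $\varphi_{n}\to\varphi$ and $T\varphi_{n}\to\chi$ yield $(T\varphi_{n})(y)=\sigma(\varphi_{n}(l^{-1}y))\to\sigma(\varphi(l^{-1}y))=(T\varphi)(y)$ for all $y$. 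By the closed graph theorem $T$ is bounded; since $|\psi(l(x))|=|(S\psi)(x)|$ and $|z|'=\sup\{|\psi(z)|:\|\psi\|\le1\}$ for $z\in N'$,
\[
|l(x)|'=\sup_{\|\psi\|\le1}|(S\psi)(x)|\le\|S\|\,|x|,
\]
so $l$ is bounded; the symmetric argument bounds $l^{-1}$, and $l$ is a homeomorphism.

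\emph{The obstacle.} Everything thus hinges on the continuity of $\sigma$ (equivalently the boundedness of $S$), and this is the genuinely hard point: a discontinuous $\sigma$ would defeat any attempt to close the graph of $T$ or of $l$ — the step "$\sigma(w_{n})\to\sigma(w)$ as $w_{n}\to w$" is exactly where continuity of $\sigma$ enters — so here one must use the lattice isomorphism as a whole, not the functionals individually. This is the phenomenon behind the classical theorem of Mackey and of Fillmore--Longstaff, that the lattice of closed subspaces of a normed space determines its norm up to equivalence: a semilinear bijection inducing such a lattice isomorphism must twist the scalars by an isometry, for otherwise the gauge $x\mapsto|l(x)|'$, which the lattice isomorphism ties to $|\cdot|$, would fail to be equivalent to any norm. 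I would carry this out following \cite{M}; the cleanest route is to invoke \cite{FL,M} to obtain a homeomorphism inducing the lattice isomorphism of the first stage, and then, as a semilinear map inducing a given collineation is unique up to a nonzero scalar, $l$ is a scalar multiple of that homeomorphism, hence itself a homeomorphism (linear or conjugate-linear by Remark \ref{rem2}).
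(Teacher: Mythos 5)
Your proposal is correct, and in its final form it lands on essentially the same proof as the paper, whose entire argument consists of citations: Mackey's Lemma~B \cite{M} for the real case, and for the complex case \cite[Lemma 2]{KM} (which is precisely the statement that $\sigma$ must be continuous, i.e.\ that $l$ is linear or conjugate-linear) followed by \cite[Lemma 3]{FL}. Your closing fallback --- upgrade the hyperplane hypothesis to an isomorphism of the lattices of closed subspaces via Hahn--Banach, invoke \cite{FL,M} to obtain a homeomorphism inducing it, and conclude by uniqueness of the inducing semilinear map up to a nonzero scalar --- is a sound variant of the same citation. What you add beyond the paper is the dual-space reduction: over $\mathbb{R}$ the automorphism $\sigma$ is automatically the identity, so your closed-graph argument with $S\psi=\sigma^{-1}\circ\psi\circ l$ and $T\varphi=\sigma\circ\varphi\circ l^{-1}$ is a complete self-contained proof of the real case (and is in essence Mackey's own argument), while in the complex case it correctly isolates continuity of $\sigma$ as the sole obstruction --- exactly the content of \cite[Lemma 2]{KM}, the one reference you did not name. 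The argument also correctly avoids assuming completeness of $N$ or $N'$ by working in the dual Banach spaces. One small repair: the closed graph theorem as you apply it bounds $T$, but your displayed estimate uses $\Vert S\Vert$; either run the same closed-graph computation directly on $S$, or pass from $T$ to $S=T^{-1}$ by the inverse mapping theorem between the Banach spaces $N^{*}$ and $(N')^{*}$.
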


\begin{proof}
We refer \cite[Lemma B]{M} for the real case.
In the complex case, $l$ is linear or conjugate linear \cite[Lemma 2]{KM}
and the statement follows from \cite[Lemma 3]{FL}.
\end{proof}

Let $S$ be a closed subspace of ${\mathcal N}$  whose codimension is equal to $1$.
The condimension of $S':=l(S)$ is equal to $1$.
If $S'$ is not closed in ${\mathcal N}'$
then the closure  $\overline{S'}$ coincides with $N'$.
Consider any non-identity $h\in {\rm PGL}({\mathcal N})$
whose restriction to ${\mathcal P}(S)$ is identity.
Then $h':=fhf^{-1}$ is a non-identity element of ${\rm PGL}({\mathcal N}')$
whose restriction to ${\mathcal P}(S')$ is identity.
If $u'\in {\rm GL}({\mathcal N}')$ and $h'=\pi(u')$
then the restriction of $u'$ to $S'$ is a scalar multiple of ${\rm id}_{S'}$.
Since $\overline{S'}=N'$, this means that $u'$ is a scalar multiple of ${\rm id}_{N'}$ and
$h'$ is identity. The latter is impossible.
Hence $S'$ is closed.

Similarly, we show that 
$l^{-1}$ transfers closed subspaces of codimension $1$ to closed subspaces of codimension $1$.
Lemma \ref{lemma2-3} gives the claim.

\end{document}